\title[Rainwater--Simons theorems for generalized convergence methods]{Rainwater--Simons type convergence theorems for generalized convergence methods}
\author{Jan-David Hardtke}
\date{}
\providecommand{\cl}[1]{\overline{#1}}
\providecommand{\wscl}[1]{\overline{#1}^{*}}
\DeclareMathOperator{\ex}{ex}
\DeclareMathOperator{\co}{co}
\DeclareMathOperator{\cco}{\cl{\co}}
\DeclareMathOperator{\wscco}{\wscl{\co}}
\newtheorem{theorem}{Theorem}[section]
\newtheorem{corollary}[theorem]{Corollary}
\definecolor{darkgreen}{rgb}{0,0.5,0}
\begin{document}

\maketitle

\begin{abstract}
 We extend the well-known Rainwater--Simons convergence theorem to various generalized convergence methods such as strong matrix summability, statistical convergence and almost convergence. In fact we prove these theorems not only for boundaries but for the more general notion of (I)-generating sets introduced by Fonf and Lindenstrauss.
\end{abstract}

\ \par \noindent {\scriptsize {\bf Keywords:} 
Boundary; (I)-generating sets; Rainwater's theorem; Simons' equality; strong matrix summability; statistical convergence; statistically pre-Cauchy sequence; almost convergence 
 
\par \noindent {\bf AMS Subject Classification (2000):} 46B20; 40C05; 40C99}

\section{Introduction}

First let us fix some notation: throughout this paper we denote by $X$ a Banach space, by $X^{*}$ its dual and by $B_{X}$ its closed unit ball. If $C$ is a convex subset of $X$, then $\ex C$ denotes the set of extreme points of $C$. We write $\co A$ for the convex hull of a subset $A$ of $X$ and $\cl{A}$ for its closure in the norm topology. Finally, for a subset $B$ of $X^{*}$ we denote by $\wscl{B}$ its weak*-closure. \par 

Now recall the notion of boundary: if $X$ is defined over the real field and $K$ is a weak*-compact convex subset of $X^{*}$, a subset $B$ of $K$ is said to be a boundary for $K$ provided that for every $x\in X$ there exists a functional $b\in B$ with $b(x)=\sup _{x^{*}\in K} x^{*}(x)$. In case $K=B_{X^{*}}$ this means that every element of $X$ attains its norm on some functional in $B$. Then $B$ is simply called a boundary for $X$. \par 

It easily follows from the Krein--Milman theorem that $\ex B_{X^{*}}$ is always a boundary for $X$. Rainwater's theorem (cf. \cite{rainwater}) states that a bounded sequence $(x_n)_{n\in \mathbb{N}}$ in $X$ is weakly convergent to $x\in X$ if it is merely convergent to $x$ under every functional $x^{*}\in \ex B_{X^{*}}$. The proof is an application of the Choquet--Bishop--de--Leeuw  theorem (cf. \cite{phelps}) combined with Lebesgue's dominated convergence theorem. In \cite{simons1} and \cite{simons2} Simons has proved a generalization of Rainwater's theorem to arbitrary boundaries. In fact he even proved a stronger statement, namely the following

\begin{theorem}[Simons, cf. \cite{simons2}] \label{th:SLS}
 If $B$ is a boundary for the weak*-compact convex subset $K\subseteq X^{*}$ then 
\begin{equation} 
\sup_{x^{*}\in K} \limsup _{n \to \infty} x^{*}(x_n)=\sup_{x^{*}\in B} \limsup_{n \to \infty} x^{*}(x_n) \label{eq:SLS}
\end{equation}
holds for every bounded sequence $(x_n)_{n\in \mathbb{N}}$ in $X$.
\end{theorem}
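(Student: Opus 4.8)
The plan is to prove the two inequalities in \eqref{eq:SLS} separately. The inequality ``$\ge$'' is trivial because $B\subseteq K$. For the reverse inequality, write $r:=\sup_{x^*\in B}\limsup_n x^*(x_n)$; since $K$ is weak*-compact (hence norm-bounded) and $(x_n)$ is bounded, the scalars $x^*(x_n)$ with $x^*\in K$, $n\in\mathbb N$, are uniformly bounded, so $r$ is a finite real number, and it suffices to show that $\limsup_n x^*(x_n)\le r$ for every $x^*\in K$.

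The engine of the proof is Simons' inequality (the technical lemma underlying the theorem): if $(g_j)_{j\in\mathbb N}$ is a uniformly bounded sequence of real-valued functions on a set $T$ and $S\subseteq T$ has the property that every function $\sum_j\lambda_j g_j$ with $\lambda_j\ge 0$, $\sum_j\lambda_j=1$ attains its supremum over $T$ at some point of $S$, then
\[
  \sup_{s\in S}\limsup_{j\to\infty}g_j(s)\ \ge\ \inf\Bigl\{\sup_{t\in T}\Bigl(\textstyle\sum_j\lambda_j g_j\Bigr)(t)\ :\ \lambda_j\ge 0,\ \textstyle\sum_j\lambda_j=1\Bigr\}.
\]
I expect the proof of this lemma to be the genuinely hard part: it proceeds by an inductive construction of convex combinations of the $g_j$ with geometrically decaying weights, arranged so that any large value attained by such a combination must be ``witnessed'' arbitrarily far out along the sequence, which is precisely where the $\limsup$ on the left comes from. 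Everything else is comparatively routine.

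Granting this, I would finish as follows. Suppose toward a contradiction that $\limsup_n x_0^*(x_n)>r+2\varepsilon$ for some $x_0^*\in K$ and some $\varepsilon>0$, and choose a subsequence $(x_{n_j})_j$ with $x_0^*(x_{n_j})>r+2\varepsilon$ for all $j$. Apply Simons' inequality with $T=K$, $S=B$, and $g_j$ the function $x^*\mapsto x^*(x_{n_j})$ on $K$: the hypothesis on $S$ is met because every $w=\sum_j\lambda_j x_{n_j}$ is a norm-convergent series in $X$, and $B$, being a boundary for $K$, contains a functional at which $\sup_{x^*\in K}x^*(w)$ is attained. The left-hand side of Simons' inequality is at most $\sup_{b\in B}\limsup_n b(x_n)=r$, because passing to a subsequence does not increase a $\limsup$. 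Hence the infimum on the right is at most $r$, so there exist $\lambda_j\ge 0$ with $\sum_j\lambda_j=1$ and $\sup_{x^*\in K}x^*(w)<r+\varepsilon$ for $w=\sum_j\lambda_j x_{n_j}$. But then, since $x_0^*\in K$,
\[
  r+\varepsilon\ >\ \sup_{x^*\in K}x^*(w)\ \ge\ x_0^*(w)\ =\ \sum_j\lambda_j\,x_0^*(x_{n_j})\ \ge\ r+2\varepsilon,
\]
a contradiction. Thus $\limsup_n x^*(x_n)\le r$ for every $x^*\in K$, and \eqref{eq:SLS} follows.

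The one point that needs care (apart from Simons' inequality itself) is the order of the two moves: one must pass to a subsequence on which $x_0^*$ stays above $r+2\varepsilon$ \emph{before} invoking Simons' inequality, so that the convex combination it delivers is simultaneously forced to have $x_0^*$-value $\ge r+2\varepsilon$ and $K$-supremum $<r+\varepsilon$. Applying the inequality directly to $(x_n)$ would only produce an infimum over convex combinations on the right-hand side, which is strictly weaker than the supremum over $K$ of the $\limsup$ that we actually want.
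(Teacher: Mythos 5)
The paper itself gives no proof of Theorem \ref{th:SLS}: it is quoted from Simons, with the remark that the argument in \cite{simons2} is an eigenvector proof and that an easy direct proof appears in \cite[Theorem 2.2]{oja}. (Implicitly the paper also contains a third route: a boundary (I)-generates $K$ by \cite[Theorem 2.3]{fonf1}, and the argument of Theorem \ref{th:main thm}, essentially that of \cite[Theorem 2.2]{cascales}, derives Simons' equality from (I)-generation.) Your proposal takes yet another route --- in fact the original one --- deducing the sup--limsup theorem from Simons' inequality. The deduction you give is correct, and the two delicate points are handled properly: you verify the attainment hypothesis of the lemma by noting that $w=\sum_j\lambda_j x_{n_j}$ converges in norm, so that $\sum_j\lambda_j g_j$ is evaluation at an element of $X$ and the boundary property of $B$ applies; and you rightly pass to a subsequence on which $x_0^*$ stays above $r+2\varepsilon$ \emph{before} invoking the inequality, since applying it to the full sequence would only control an infimum over convex combinations rather than $\limsup_n x_0^*(x_n)$. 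The trivial inequality ``$\ge$'' and the finiteness of $r$ are also fine. The one caveat is that Simons' inequality itself --- where essentially all of the substance of the theorem resides --- is stated and motivated but not proved, so your argument is a reduction to a standard named lemma rather than a self-contained proof. Compared with the (I)-generation route underlying the rest of the paper, your approach concentrates the difficulty in one combinatorial lemma about sequences of functions, whereas the paper's machinery concentrates it in the Fonf--Lindenstrauss theorem that boundaries (I)-generate; both yield the same equality, but the (I)-generation formulation is the one that generalizes to the convergence methods treated in Section 3.
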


\noindent The equality \eqref{eq:SLS} is often referred to as Simons' equality. The proof given in \cite{simons2} is based on an eigenvector argument and actually works for boundaries of arbitrary subsets of $X^{*}$ (which not even need to be weak*-compact or convex) but we are only interested in the above special case. For an easy direct proof of Simons' equality see also \cite[Theorem 2.2]{oja}. From Theorem \ref{th:SLS} it is clear that Rainwater's theorem  holds true for every boundary $B$ of the space $X$. \par 

Next we recall the definition of (I)-generating sets given by Fonf and Lindenstrauss in \cite{fonf1} (here $X$ can be a real or complex space): let $K$ be a weak*-compact convex subset of $X^{*}$ and $B\subseteq K$. Then $B$ is said to (I)-generate $K$ provided that whenever $B$ is written as a countable union $B=\bigcup _{n=1}^{\infty}B_n$ we have that $K=\cco{\bigcup _{n=1}^{\infty} \wscco{B_n}}$. \par 

We clearly have
\begin{equation*}
\cco{B}=K \Rightarrow B \ \text{(I)-generates} \ K \Rightarrow \wscco{B}=K,
\end{equation*} 
but none of the converses is true in general (cf. the examples in \cite{fonf1}). Further note that $B$ (I)-generates $K$ if and only if the following holds: whenever $B$ is written as an increasing union of countably many subsets $(B_n)_{n\in \mathbb{N}}$, then $\bigcup _{n=1}^{\infty} \wscco{B_n}$ is norm-dense in $K$. \par 

Now if $B$ is a boundary for $K$ it follows from the Hahn--Banach separation theorem that $\wscco{B}=K$. In \cite[Theorem 2.3]{fonf1} it is proved that $B$ actually (I)-generates $K$. Together with the observation that for a norm separable (I)-generating subset $B$ of $K$ we already have $\cco{B}=K$ (cf. \cite[Proposition 2.2, (a)]{fonf1}), this leads to a proof of James' celebrated compactness theorem in the separable case (cf. \cite[Theorems 5.7 and 5.9]{fonf2} or the introduction of \cite{kalenda1}). In fact one even gets stronger versions of James' theorem for separable spaces (cf. \cite{fonf1}). \par 

In \cite{nygaard} Nygaard proved that the statement of the Rainwater--Simons convergence theorem holds for every set $B$ that (I)-generates $B_{X^{*}}$ and used this observation combined with the result of Fonf and Lindenstrauss to give a short proof of James' reflexivity criterion in case $B_{X^{*}}$ is weak*-sequentially compact. Independently in \cite{kalenda1} Kalenda introduced the concept of (I)-envelopes and studied the possibility of proving the general James' compactness theorem by these methods. The studies were continued in \cite{kalenda2}. In particular he implicitly proved that the (I)-generation property is equivalent to Simons' equality (cf. \cite[Lemma 2.1]{kalenda1}). Another, explicit proof of this fact may be found in \cite[Theorem 2.2]{cascales}. \par 

Before we can extend the Rainwater--Simons convergence theorem, we have to discuss some generalized convergence methods . This is done in the next section.

\section{Generalized convergence methods}

Consider an infinite complex matrix $A=(a_{nk})_{n,k\in \mathbb{N}}$. A sequence $(s_k)_{k\in \mathbb{N}}$ of scalars is said to be $A$-convergent (or $A$-summable) to $s$, if the series $\sum _{k=1}^{\infty} a_{nk}s_k$ is convergent for every $n\in \mathbb{N}$ and $\lim _{n\to \infty} \sum _{k=1}^{\infty} a_{nk}s_k=s$. \par 

The matrix $A$ is called regular if every sequence which is convergent in the ordinary sense is also $A$-convergent to the same limit. According to a well-known theorem of Toeplitz $A$ is regular if and only if the following conditions are satisfied:
\begin{equation*}
 \sup _{n\in \mathbb{N}} \sum _{k=1}^{\infty} |a_{nk}| < \infty, \  \lim_{n\to \infty} \sum _{k=1}^{\infty} a_{nk}=1 \ \text{and} \ \lim _{n\to \infty} a_{nk}=0 \ \forall k\in \mathbb{N}.
\end{equation*} 
The most prominent example of a regular matrix is the Ces\`aro matrix $C=(c_{nk})_{n,k\in \mathbb{N}}$ defined by $c_{nk}=1/n$ for $k\leq n$ and $c_{nk}=0$ for $k>n$. We refer the reader to \cite{zeller} for more information on regular summability matrices. It is clear that the Rainwater--Simons convergence theorem carries over to matrix summability methods, but less evident that it also holds for the following methods. \par 

If $A$ is a regular positive matrix (i.e., $a_{nk}\geq 0$ for all $n,k\in \mathbb{N}$) and $p>0$, then the sequence $(s_k)_{k\in \mathbb{N}}$ is said to be strongly $A$-$p$-convergent to $s$ provided that $\sum _{k=1}^{\infty} a_{nk}|s_k-s|^p<\infty$ for each $n$ and $\lim _{n\to \infty} \sum_{k=1}^{\infty} a_{nk}|s_k-s|^p=0$. The strong $A$-$p$-convergence is a linear consistent summability method and the strong $A$-$p$-limit of a sequence is unique if it exists. For some results on strong matrix summability we refer to \cite{zeller} (with index $p=1$) or \cite{hamilton}. \par 

In \cite{maddox} Maddox introduced and studied a more general form of strong matrix summability, replacing the index $p$ by a sequence of indices: if $A$ is a positive infinite  matrix and $\mathbf{p}=(p_k)_{k\in \mathbb{N}}$ a sequence of strictly positive numbers, then the sequence $(s_k)_{k\in \mathbb{N}}$ is said to be strongly $A$-$\mathbf{p}$-convergent to $s$ if $\sum _{k=1}^{\infty} a_{nk}|s_k-s|^{p_k}<\infty$ for every $n\in \mathbb{N}$ and $\lim _{n\to \infty} \sum _{k=1}^{\infty} a_{nk}|s_k-s|^{p_k}=0$. Again $A$-$\mathbf{p}$-convergence is a linear method, provided the sequence $\mathbf{p}$ is bounded.\par 

Another common generalized convergence method is that of statistical convergence introduced by Fast in \cite{fast}: a sequence $(s_k)_{k\in \mathbb{N}}$ of (real or complex) numbers is called statistically convergent to $s$ if for each $\varepsilon>0$ we have that $\lim_{n\to \infty} 1/n|\{k\leq n: |s_k-s|\geq \varepsilon\}|=0$. More generally one can consider $A$-statistical convergence for a positive regular matrix $A$: the sequence $(s_k)_{k\in \mathbb{N}}$ is $A$-statistically convergent to $s$ if for each $\varepsilon>0$  we have $\lim_{n\to \infty} \sum _{k=1}^{\infty} a_{nk}\chi_{B_{\varepsilon}}(k)=0$, where $B_{\varepsilon}=\{k\in \mathbb{N}:|s_k-s|\geq \varepsilon\}$ and for $M\subseteq \mathbb{N}$ the symbol $\chi_{M}$ denotes the characteristic function of $M$. For $A=C$, the Ces\`aro matrix, we have the ordinary statistical convergence. It is easy to check that $A$-statistical convergence is a linear consistent method and that the $A$-statistical limit is uniquely determined whenever it exists. In \cite{connor1} Connor proved the following connection between statistical and strong Ces\`aro convergence:

\begin{theorem}[Connor, cf. \cite{connor1}] \label{th:stat-str-matrix-conv}
Let $(s_k)_{k\in \mathbb{N}}$ be a sequence of numbers, $p>0$ and $s$ a number. Then the following hold
\begin{enumerate}[\upshape(i)]
\item If $(s_k)_{k\in \mathbb{N}}$ is strongly $p$-Ces\`aro convergent to $s$, then it is also statistically convergent to $s$.
\item If $(s_k)_{k\in \mathbb{N}}$ is bounded and statistically convergent to $s$, then it is also strongly $p$-Ces\`aro convergent to $s$.
\end{enumerate}
\end{theorem}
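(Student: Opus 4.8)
The plan is to deduce both implications from elementary averaging estimates. First I would observe that for the Ces\`aro matrix $C$ the defining series $\sum_{k=1}^{\infty}c_{nk}|s_k-s|^p$ are in fact finite sums --- they equal $\frac1n\sum_{k=1}^{n}|s_k-s|^p$ --- so that ``strongly $p$-Ces\`aro convergent to $s$'' simply means $\frac1n\sum_{k=1}^{n}|s_k-s|^p\to0$, and the summability part of the definition is automatic. Throughout I would set $B_\varepsilon=\{k\in\mathbb{N}:|s_k-s|\ge\varepsilon\}$ for $\varepsilon>0$.

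For part (i), fixing $\varepsilon>0$, I would use the Chebyshev-type bound
\[\frac1n\sum_{k=1}^{n}|s_k-s|^p\ \ge\ \frac1n\!\!\sum_{\substack{k\le n\\ k\in B_\varepsilon}}\!\!|s_k-s|^p\ \ge\ \frac{\varepsilon^p}{n}\,\bigl|\{k\le n:k\in B_\varepsilon\}\bigr|.\]
Hence $\frac1n\bigl|\{k\le n:k\in B_\varepsilon\}\bigr|\le\varepsilon^{-p}\cdot\frac1n\sum_{k=1}^{n}|s_k-s|^p\to0$, and since $\varepsilon>0$ was arbitrary this is precisely statistical convergence to $s$.

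For part (ii), I would use boundedness to pick $M>0$ with $|s_k-s|\le M$ for all $k$. Fixing $\varepsilon>0$ and splitting the average according to whether $|s_k-s|<\varepsilon$ or $|s_k-s|\ge\varepsilon$ gives
\[\frac1n\sum_{k=1}^{n}|s_k-s|^p\ \le\ \varepsilon^p\ +\ \frac{M^p}{n}\,\bigl|\{k\le n:|s_k-s|\ge\varepsilon\}\bigr|,\]
where the last term tends to $0$ as $n\to\infty$ by the statistical convergence of $(s_k)$ to $s$. Therefore $\limsup_{n\to\infty}\frac1n\sum_{k=1}^{n}|s_k-s|^p\le\varepsilon^p$, and letting $\varepsilon\to0$ yields strong $p$-Ces\`aro convergence to $s$.

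I do not expect a genuine obstacle; the argument is short. The only point worth underlining is that boundedness is essential in (ii) to control the ``large'' part of the average (and indeed (ii) fails for unbounded sequences whose nonzero deviations sit on a set of density zero but grow fast enough), whereas (i) needs no boundedness. It is also worth recalling at the outset that both the statistical limit and the strong $p$-Ces\`aro limit are unique when they exist, so the value $s$ appearing in the two parts is unambiguous.
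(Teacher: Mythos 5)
Your argument is correct and is essentially the standard Chebyshev-type estimate of Connor, which is exactly the proof the paper relies on: the paper states this result as a citation to \cite{connor1} without reproducing the argument, and your two inequalities (the lower bound $\varepsilon^p\cdot\frac1n|\{k\le n:|s_k-s|\ge\varepsilon\}|$ for (i) and the split into the $|s_k-s|<\varepsilon$ and $|s_k-s|\ge\varepsilon$ parts for (ii)) are precisely how that reference proceeds. Your side remarks --- that the Ces\`aro sums are finite so summability is automatic, and that boundedness is genuinely needed in (ii) --- are accurate and consistent with the paper's subsequent use of the theorem.
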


\noindent Virtually the same proof as given in \cite{connor1} also works for $A$-statistical and strong $A$-$p$-convergence in case of an arbitrary positive regular matrix $A$. In particular, $A$-statistical and strong $A$-$p$-convergence are equivalent on bounded sequences (for this see also \cite[Theorem 8]{connor2}) and hence for any two indices $p,q>0$ strong $A$-$p$- and strong $A$-$q$-convergence are equivalent on bounded sequences. \par

 We further recall the notion of statistically pre-Cauchy sequences, introduced in \cite{connor4}: a sequence $(s_k)_{k\in \mathbb{N}}$ of scalars is called statistically pre-Cauchy if $\lim_{n\to \infty} 1/n^2|\{(i,j)\in \{1,\dots,n\}^2: |s_i-s_j|\geq \varepsilon\}|=0$ for all $\varepsilon>0$. It is proved in \cite{connor4} that a statistically convergent sequence is statistically pre-Cauchy, whereas the converse is not true in general, but under certain additional assumptions (cf. \cite[Theorems 5 and 7]{connor4}). Also, the following analogue of theorem 2.1 holds:

\begin{theorem}[Connor et al., cf. \cite{connor4}] \label{th:stat-pre-cauchy}
A sequence $(s_k)_{k\in \mathbb{N}}$ is statistically pre-Cauchy if 
\begin{equation}
\lim_{n\to \infty} \frac{1}{n^2} \sum _{i,j\leq n} |s_i-s_j|=0. \label{eq:stat-pre-cauchy}
\end{equation}
The converse is true if $(s_k)_{k\in \mathbb{N}}$ is bounded.
\end{theorem}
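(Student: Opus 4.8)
The plan is to follow the elementary proof of Connor's theorem (Theorem \ref{th:stat-str-matrix-conv}) almost verbatim, with the $n^{2}$ pairs $(i,j)\in\{1,\dots,n\}^{2}$ playing the role of the first $n$ terms and with $|s_i-s_j|$ in place of $|s_k-s|$. Throughout, write $E_n(\delta)=\{(i,j)\in\{1,\dots,n\}^{2}:|s_i-s_j|\geq\delta\}$ for $\delta>0$.

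For the first implication, assume \eqref{eq:stat-pre-cauchy} and fix $\varepsilon>0$. Discarding from $\sum_{i,j\leq n}|s_i-s_j|$ all pairs not in $E_n(\varepsilon)$ and bounding each remaining summand below by $\varepsilon$ gives
\[
\frac{1}{n^{2}}\sum_{i,j\leq n}|s_i-s_j|\;\geq\;\frac{\varepsilon}{n^{2}}\,\bigl|E_n(\varepsilon)\bigr|,
\]
so $\frac{1}{n^{2}}|E_n(\varepsilon)|\to 0$; since $\varepsilon>0$ was arbitrary, $(s_k)_{k\in\mathbb{N}}$ is statistically pre-Cauchy.

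For the converse, suppose $(s_k)_{k\in\mathbb{N}}$ is bounded, say $|s_k|\leq M$ for all $k$, so that $|s_i-s_j|\leq 2M$ for all $i,j$, and assume $(s_k)_{k\in\mathbb{N}}$ is statistically pre-Cauchy. Fix $\varepsilon>0$ and split the sum according to whether $(i,j)\in E_n(\varepsilon/2)$ or not: the pairs in $E_n(\varepsilon/2)$ contribute at most $\frac{2M}{n^{2}}|E_n(\varepsilon/2)|$, and the remaining pairs contribute at most $\varepsilon/2$. By the statistical pre-Cauchy hypothesis $\frac{1}{n^{2}}|E_n(\varepsilon/2)|\to 0$, so the first term is $<\varepsilon/2$ for all large $n$; hence $\frac{1}{n^{2}}\sum_{i,j\leq n}|s_i-s_j|<\varepsilon$ eventually, which yields \eqref{eq:stat-pre-cauchy}.

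There is no real obstacle here: the argument is a routine two-sided estimate. The only points needing (minor) care are the bookkeeping of constants — using $\varepsilon/2$ in the splitting so that both error terms are absorbed — and the observation that boundedness of $(s_k)_{k\in\mathbb{N}}$ is exactly what supplies the uniform bound $2M$ on $|s_i-s_j|$ needed to control the "large" pairs, precisely as in Connor's original proof.
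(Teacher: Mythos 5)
Your argument is correct: the Chebyshev-type lower bound $\frac{1}{n^2}\sum_{i,j\leq n}|s_i-s_j|\geq \frac{\varepsilon}{n^2}|E_n(\varepsilon)|$ gives the first implication, and the splitting at $E_n(\varepsilon/2)$ together with the uniform bound $2M$ gives the converse for bounded sequences. The paper itself states this result without proof (it is quoted from Connor, Fridy and Kline), and your proof is essentially the standard argument from that reference, modelled---as you say---on the proof of Theorem~\ref{th:stat-str-matrix-conv}.
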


 It is easy to deduce from the classical Rainwater--Simons convergence theorem the fact that a bounded sequence in $X$ is weakly Cauchy if and only if it is a Cauchy sequence under every functional in $B$, where $B$ is any boundary for $X$. In section 3 we shall see that the same statement holds if one replaces ``Cauchy sequence'' by ``statistically pre-Cauchy sequence'' and $B$ is any (I)-generating subset of $B_{X^{*}}$. \par 

We remark that there exists another notion of statistically Cauchy sequences introduced in \cite{fridy}, which turns out to be equivalent to statistical convergence (cf. \cite[Theorem 1]{fridy}), but this criterion is difficult to apply if one has no idea what the statistical limit might look like. This was the main motivation for introducing the concept of statistically pre-Cauchy sequences in \cite{connor4}. \par 

More information on statistical convergence can be found in \cite{connor1}, \cite{connor2} and \cite{fridy}. For some applications of statistical convergence in Banach space theory see also \cite{connor3}. \par 

Finally, let us discuss the notion of almost convergence. For this we first recall the definition of a Banach limit: if $L:\ell^{\infty} \rightarrow \mathbb{R}$ is a linear functional with $L(\mathbf{1})=1$, $\mathbf{x}\geq 0 \Rightarrow L(\mathbf{x})\geq 0$ and $L(T\mathbf{x})=L(\mathbf{x})$ for each $\mathbf{x}\in \ell^{\infty}$, where $\mathbf{1}=(1,1,\dots)$ and $T:\ell^{\infty} \rightarrow \ell^{\infty}$ denotes the shift operator (i.e., $(T\mathbf{x})(n)=\mathbf{x}(n+1)$), then $L$ is called a Banach limit. The existence of a Banach limit can be easily proved using the Hahn--Banach extension theorem. \par 

In \cite{lorentz} Lorentz defined a bounded sequence $(s_k)_{k\in \mathbb{N}}$ of real numbers to be almost convergent to $s\in \mathbb{R}$ if $L(s_k)=s$ for every Banach limit $L$. It is easy to see that every convergent sequence is also almost convergent (to the same limit). For an easy example showing that the converse is not true, note that the sequence $(1,0,1,0,\dots)$ is almost convergent to $1/2$. In general Lorentz proved that almost convergence is equivalent to ``uniform Ces\`aro convergence'' in the following sense:

\begin{theorem}[Lorentz, cf. \cite{lorentz}] \label{th:alm-conv}
 A bounded sequence $(s_k)_{k\in \mathbb{N}}$ of real numbers is almost convergent to $s\in \mathbb{R}$ if and only if
\begin{equation}
 \frac{1}{n} \sum _{k=1}^{n} s_{k+l} \xrightarrow{n\to \infty} s \  \text{uniformly in} \ l\in \mathbb{N}_0. \label{eq:alm-conv}
\end{equation}
\end{theorem}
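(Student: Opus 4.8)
The plan is to prove the two implications of the equivalence separately. The implication from \eqref{eq:alm-conv} to almost convergence is straightforward and uses only that Banach limits are norm-continuous and shift-invariant; the reverse implication is the substantial one, and I would obtain it through a Hahn--Banach argument centred on an auxiliary sublinear functional that encodes the uniform Ces\`aro averages.

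For the easy direction, suppose \eqref{eq:alm-conv} holds. Write $\mathbf{s}=(s_k)_{k\in\mathbb{N}}\in\ell^{\infty}$, let $T$ denote the shift, and set $\mathbf{y}_n=\frac{1}{n}\sum_{k=1}^{n}T^{k}\mathbf{s}$, so that $\mathbf{y}_n(l)=\frac{1}{n}\sum_{k=1}^{n}s_{k+l}$; then \eqref{eq:alm-conv} gives $\|\mathbf{y}_n-s\mathbf{1}\|_{\infty}\to 0$. If $L$ is any Banach limit, positivity together with $L(\mathbf{1})=1$ forces $|L(\mathbf{x})|\le\|\mathbf{x}\|_{\infty}$, so $L$ is continuous, while shift-invariance gives $L(\mathbf{y}_n)=L(\mathbf{s})$ for every $n$; letting $n\to\infty$ yields $L(\mathbf{s})=L(s\mathbf{1})=s$. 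As $L$ was arbitrary, $(s_k)_{k\in\mathbb{N}}$ is almost convergent to $s$.

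For the converse I would introduce $q\colon\ell^{\infty}\to\mathbb{R}$, $q(\mathbf{x})=\lim_{n\to\infty}\frac{1}{n}\sup_{l\ge0}\sum_{k=1}^{n}x_{k+l}$; the limit exists because $a_n(\mathbf{x})=\sup_{l\ge0}\sum_{k=1}^{n}x_{k+l}$ is subadditive in $n$, so Fekete's lemma applies. One checks that $q$ is positively homogeneous and subadditive, that $q(\mathbf{1})=1$, that $q(\mathbf{x})\le 0$ whenever $\mathbf{x}\le 0$, and that $q(T\mathbf{x}-\mathbf{x})\le 0$ and $q(\mathbf{x}-T\mathbf{x})\le 0$ --- the last two since $\sum_{k=1}^{n}(x_{k+l+1}-x_{k+l})=x_{n+l+1}-x_{l+1}$ telescopes and stays bounded by $2\|\mathbf{x}\|_{\infty}$, hence is $o(n)$ uniformly in $l$. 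Then, given $\mathbf{x}\in\ell^{\infty}$, I would extend the functional $\lambda\mathbf{x}\mapsto\lambda q(\mathbf{x})$ from $\mathbb{R}\mathbf{x}$ (where it is dominated by $q$, using $q(\mathbf{x})+q(-\mathbf{x})\ge q(\mathbf{0})=0$) by Hahn--Banach to a linear $L\le q$ on all of $\ell^{\infty}$ with $L(\mathbf{x})=q(\mathbf{x})$. The domination $L\le q$ and the properties of $q$ then force $L$ to be a Banach limit: $L(\mathbf{1})=1$ because $L(-\mathbf{1})\le q(-\mathbf{1})=-1$; $L$ is positive because $\mathbf{y}\ge0$ gives $L(-\mathbf{y})\le q(-\mathbf{y})\le0$; and $L$ is shift-invariant because $L(T\mathbf{y}-\mathbf{y})\le q(T\mathbf{y}-\mathbf{y})\le0$ and symmetrically. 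Applying this once to $\mathbf{x}=\mathbf{s}$ and once to $\mathbf{x}=-\mathbf{s}$, the hypothesis that $(s_k)_{k\in\mathbb{N}}$ is almost convergent to $s$ yields $q(\mathbf{s})=s$ and $q(-\mathbf{s})=-s$; unravelling the definitions, these say $\lim_n\sup_{l\ge0}\frac{1}{n}\sum_{k=1}^{n}s_{k+l}=s$ and $\lim_n\inf_{l\ge0}\frac{1}{n}\sum_{k=1}^{n}s_{k+l}=s$, which together are exactly \eqref{eq:alm-conv}.

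The main obstacle is the converse direction, and within it the verification that the Hahn--Banach extension $L$ dominated by $q$ is genuinely a Banach limit --- especially shift-invariance, which relies on the telescoping estimate forcing $q(T\mathbf{x}-\mathbf{x})$ and $q(\mathbf{x}-T\mathbf{x})$ to vanish. Everything else --- subadditivity of $a_n$, the elementary properties of $q$, and the translation between the values $q(\pm\mathbf{s})$ and the two-sided uniform estimate --- is routine.
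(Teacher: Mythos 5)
Your proof is correct. Note first that the paper does not prove this theorem at all: it is quoted from Lorentz's 1948 paper as a known result, so there is no in-paper argument to compare against. Your two directions both check out. The easy direction ($\eqref{eq:alm-conv}\Rightarrow$ almost convergence) is exactly the standard argument: positivity plus $L(\mathbf{1})=1$ gives $|L(\mathbf{x})|\le\|\mathbf{x}\|_{\infty}$, shift-invariance freezes $L$ on the Ces\`aro averages, and norm convergence of those averages to $s\mathbf{1}$ finishes it. Your converse is essentially Sucheston's classical proof (Amer.\ Math.\ Monthly 74 (1967)) rather than Lorentz's original, more combinatorial argument: the functional $q(\mathbf{x})=\lim_{n}\frac{1}{n}\sup_{l}\sum_{k=1}^{n}x_{k+l}$ is well defined by Fekete's lemma (and finite since $a_n(\mathbf{x})\ge -n\|\mathbf{x}\|_{\infty}$), it is sublinear, the domination of $\lambda\mapsto\lambda q(\mathbf{x})$ on $\mathbb{R}\mathbf{x}$ reduces for $\lambda<0$ to $q(\mathbf{x})+q(-\mathbf{x})\ge 0$, and the telescoping bound $\bigl|\sum_{k=1}^{n}(x_{k+l+1}-x_{k+l})\bigr|\le 2\|\mathbf{x}\|_{\infty}$ does force $q(T\mathbf{x}-\mathbf{x})\le 0$ and $q(\mathbf{x}-T\mathbf{x})\le 0$, hence shift-invariance of the Hahn--Banach extension; so every value $q(\mathbf{x})$ (and likewise $-q(-\mathbf{x})$) is attained by some Banach limit, and almost convergence to $s$ pins down $\lim_n\sup_l\frac{1}{n}\sum_{k=1}^{n}s_{k+l}=s=\lim_n\inf_l\frac{1}{n}\sum_{k=1}^{n}s_{k+l}$, which is \eqref{eq:alm-conv}. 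What this route buys is a clean identification of the range of all Banach limits on a fixed $\mathbf{x}$ as the interval $[-q(-\mathbf{x}),q(\mathbf{x})]$, from which the theorem falls out; the only point worth stating explicitly in a final write-up is that you need two separate Hahn--Banach extensions (one realizing $q(\mathbf{s})$, one realizing $q(-\mathbf{s})$), which you do indicate.
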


\noindent Lorentz then introduced the notion of $F_A$-convergence, replacing the Ces\`aro matrix in (2) by an arbitrary regular matrix $A$: a bounded sequence $(s_k)_{k\in \mathbb{N}}$ is said to be $F_A$-convergent to $s$ if 
\begin{equation*}
\sum _{k=1}^{\infty} a_{nk}s_{k+l} \xrightarrow{n\to \infty} s \ \text{uniformly in} \ l\in \mathbb{N}_0.
\end{equation*}
In particular, Lorentz characterized those regular matrices $A$ for which $F_A$- and almost convergence are equivalent. We refer to \cite{lorentz} for information on this subject. Further references for generalized convergence methods can be found in the literature mentioned above.

\section{Extending the convergence theorem}

We now prove a general theorem, resembling \ref{th:SLS}, from which the extended forms of the convergence theorem will easily follow. We denote by $\tau_p$ the topology of pointwise convergence on $\ell^{\infty}$.

\begin{theorem} \label{th:main thm}
 Let $K$ be a weak*-compact convex subset of $X^{*}$ and $B$ an (I)-generating subset of $K$. Further, let $P:\ell^{\infty} \rightarrow \ell^{\infty}$ be a map with $P(0)=0$. Denote by $P_n$ the map $\mathbf{x} \mapsto |(P\mathbf{x})(n)|$ and suppose that the following conditions are satisfied: 
\begin{enumerate}[\upshape(i)]
\item For each $n$ the map $P_n$ is convex and lower semicontinuous with respect to $\tau_p$ on every bounded subset of $\ell^{\infty}$.
\item There exists $M\geq 0$ with $P_n(\mathbf{x}+\mathbf{y}) \leq M(P_n\mathbf{x}+P_n\mathbf{y})$ for all $n\in \mathbb{N}$ and all $\mathbf{x},\mathbf{y} \in \ell^{\infty}$.
\item $P$ is continuous at $0$ with respect to the norm topology of $\ell^{\infty}$.
\end{enumerate}
Then for every bounded sequence $\mathbf{x}=(x_n)_{n\in \mathbb{N}}$ in $X$ we have
\begin{equation}
\sup _{x^{*}\in K} \limsup _{n\to \infty} P_n(x^{*}(\mathbf{x})) \leq M\sup _{x^{*}\in B} \limsup _{n\to \infty} \label{eq:main thm} P_n(x^{*}(\mathbf{x})) \ ,
\end{equation} 
where $x^{*}(\mathbf{x})$ denotes the sequence $(x^{*}(x_n))_{n\in \mathbb{N}}$.
\end{theorem}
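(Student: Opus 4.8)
The plan is to transfer everything to the functions $g_n\colon X^{*}\to[0,\infty)$ defined by $g_n(x^{*}):=P_n(x^{*}(\mathbf{x}))$ and then to run a Simons-type exhaustion argument. First I would record, writing $R:=\sup_{n}\|x_n\|$, that the map $x^{*}\mapsto x^{*}(\mathbf{x})$ is linear, weak*-to-$\tau_p$ continuous, and sends $K$ into the bounded set $\{\mathbf{s}\in\ell^{\infty}:\|\mathbf{s}\|_{\infty}\le R\}$. Composing this with the $P_n$ and using (i)--(iii) together with $P(0)=0$, one checks four facts: (a) each $g_n$ is convex on $X^{*}$ (a convex function composed with a linear map); (b) each $g_n$ is weak*-lower semicontinuous on $K$, since (i) provides lower semicontinuity of $P_n$ precisely on the bounded set $\{x^{*}(\mathbf{x}):x^{*}\in K\}$; (c) $g_n(x^{*}+y^{*})\le M\bigl(g_n(x^{*})+g_n(y^{*})\bigr)$ for all $x^{*},y^{*}\in X^{*}$, directly from (ii); and (d) $\sup_{n}g_n(x^{*})=\|P(x^{*}(\mathbf{x}))\|_{\infty}\to 0$ as $\|x^{*}\|\to 0$, because $\|x^{*}(\mathbf{x})\|_{\infty}\le R\|x^{*}\|$ and $P$ is norm-continuous at $0$. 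If $M=0$ then (c) forces $g_n\equiv 0$ and there is nothing to prove, so I may assume $M>0$; I may also assume $\beta:=\sup_{x^{*}\in B}\limsup_{n}g_n(x^{*})<\infty$, since otherwise the right-hand side of \eqref{eq:main thm} is infinite.

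Next, fix $\varepsilon>0$ and set $B_m:=\{x^{*}\in B:\ g_n(x^{*})\le\beta+\varepsilon\ \text{for all}\ n\ge m\}$. Because $\limsup_{n}g_n(x^{*})\le\beta$ on $B$, these sets form an increasing sequence with $B=\bigcup_{m=1}^{\infty}B_m$, so the increasing-union reformulation of the (I)-generation property recalled in the introduction shows that $\bigcup_{m=1}^{\infty}\wscco{B_m}$ is norm-dense in $K$. I would then use (a) to obtain $g_n(x^{*})\le\beta+\varepsilon$ for all $n\ge m$ whenever $x^{*}\in\co B_m$, and afterwards (b) to pass to the weak*-closure: given $x^{*}\in\wscco{B_m}$, take a net in $\co B_m$ converging weak* to $x^{*}$ and invoke weak*-lower semicontinuity of $g_n$ on $K$. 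Consequently $\limsup_{n}g_n(x^{*})\le\beta+\varepsilon$ for every $x^{*}\in\wscco{B_m}$ and every $m$.

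Finally, given an arbitrary $y^{*}\in K$, use the norm-density to choose $x_j^{*}\in\wscco{B_{m_j}}$ with $\|y^{*}-x_j^{*}\|\to 0$. From (c) one has $g_n(y^{*})\le M\bigl(g_n(x_j^{*})+g_n(y^{*}-x_j^{*})\bigr)$ for every $n$, so taking $\limsup_{n}$ and using the previous paragraph,
\begin{equation*}
\limsup_{n\to\infty}g_n(y^{*})\le M\Bigl((\beta+\varepsilon)+\sup_{n}g_n(y^{*}-x_j^{*})\Bigr).
\end{equation*}
By (d) the last summand tends to $0$ as $j\to\infty$, whence $\limsup_{n}g_n(y^{*})\le M(\beta+\varepsilon)$; letting $\varepsilon\to 0$ and taking the supremum over $y^{*}\in K$ gives \eqref{eq:main thm}. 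I do not expect a genuine obstacle; the one point that needs care is the verification of fact (b) -- weak*-lower semicontinuity of $g_n$ on $K$ -- which rests on $K$ being bounded and on (i) being assumed only on bounded subsets of $\ell^{\infty}$, and, relatedly, on applying the three hypotheses in the correct order: convexity to handle $\co B_m$, then lower semicontinuity to reach $\wscco{B_m}$, then quasi-subadditivity together with norm-continuity of $P$ at $0$ to move from $\bigcup_m\wscco{B_m}$ to all of $K$.
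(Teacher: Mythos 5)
Your proposal is correct and follows essentially the same route as the paper's own proof: the same exhaustion $B_m=\{x^{*}\in B: P_n(x^{*}(\mathbf{x}))\le S+\varepsilon\ \forall n\ge m\}$, the same use of (i) to pass from $\co B_m$ to $\wscco{B_m}$, and the same use of (ii) and (iii) with $\|x^{*}(\mathbf{x})-\tilde x^{*}(\mathbf{x})\|_{\infty}\le R\|x^{*}-\tilde x^{*}\|$ to transfer the bound to all of $K$. The only differences are cosmetic (an approximating sequence instead of a single $\delta/R$-approximant, and the harmless side remark about $M=0$).
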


\begin{proof}
Denote the supremum on the right hand side of (4) by $S$. If $S=\infty$ the statement is clear, so we may assume $S<\infty$. Now take $x^{*}\in K$ and $\varepsilon>0$ and fix a constant $R>0$ with $\|x_n\|\leq R$ for all $n$. Define for all $N\in \mathbb{N}$
\begin{equation*}
B_N=\{y^{*}\in B: P_n(y^{*}(\mathbf{x}))\leq S+\varepsilon \ \forall n\geq N\}.
\end{equation*} 
Then $B_N\nearrow B$ and since $B$ (I)-generates $K$ it follows that 
\begin{equation}
\cl{\bigcup _{N=1}^{\infty} \wscco{B_N}}=K. \label{eq:1}
\end{equation}
By (iii) we can find $\delta>0$ such that for all $\mathbf{y} \in \ell^{\infty}$ we have 
\begin{equation}
\|\mathbf{y}\|_{\infty}\leq \delta \ \Rightarrow \ \|P\mathbf{y}\|_{\infty}\leq \varepsilon. \label{eq:2}
\end{equation}
By \eqref{eq:1} there exists an index $N\in \mathbb{N}$ and a functional $\tilde{x}^{*}\in \wscco{B_N}$ with $\|x^{*}-\tilde{x}^{*}\|\leq \delta / R$. From (i) and the definition of $B_N$ we conclude that 
\begin{equation}
P_n(\tilde{x}^{*}(\mathbf{x}))\leq S+\varepsilon \ \ \forall n\geq N. \label{eq:3}
\end{equation}
Now for all $n\geq N$ we have 
\begin{align*}
&P_n(x^{*}(\mathbf{x})) \stackrel{(ii)}{\leq} M(P_n(x^{*}(\mathbf{x})-\tilde{x}^{*}(\mathbf{x}))+P_n(\tilde{x}^{*}(\mathbf{x}))) \\ &\stackrel{\eqref{eq:3}}{\leq} M(P_n(x^{*}(\mathbf{x})-\tilde{x}^{*}(\mathbf{x}))+S+\varepsilon) \leq M(S+2\varepsilon) \ ,
\end{align*}
where the last inequality holds because of $\|x^{*}(\mathbf{x})-\tilde{x}^{*}(\mathbf{x})\|_{\infty}\leq \|x^{*}-\tilde{x}^{*}\|R\leq \delta$ and \eqref{eq:2}. So we can conclude $\limsup_{n\to \infty} P_n(x^{*}(\mathbf{x})) \leq M(S+2\varepsilon)$ and since $\varepsilon$ was arbitrary the proof is finished.
\end{proof}

\noindent We remark that the above proof is a slight modification of the argument for the first implication in the proof of \cite[Theorem 2.2]{cascales}. \par 

We can now collect some corollaries. First we consider strong matrix summability and related methods as described in the previous section.

\begin{corollary} \label{cor:str-matrix SLS1}
Let $B$ be an (I)-generating subset of the weak*-compact convex set $K\subseteq X^{*}$, $A=(a_{nk})_{n,k\in \mathbb{N}}$ a positive regular matrix and $\mathbf{p}=(p_k)_{k\in \mathbb{N}}$ a sequence in $\mathbb{R}$ with $p_k\geq 1$ for all $k$ and $r=\sup _{k\in \mathbb{N}} p_k < \infty$. \par 
\noindent Then for every bounded sequence $(x_n)_{n\in \mathbb{N}}$ in $X$ we have 
\begin{equation}
\sup_{x^{*}\in K} \limsup_{n\to \infty} \sum_{k=1}^{\infty} a_{nk}|x^{*}(x_n)|^{p_k} \leq 2^{r-1}\sup_{x^{*}\in B} \limsup_{n\to \infty} \sum_{k=1}^{\infty} a_{nk} |x^{*}(x_n)|^{p_k}.
\end{equation}
\end{corollary}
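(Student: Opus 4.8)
The plan is to derive this directly from Theorem~\ref{th:main thm} by choosing the map $P$ appropriately. I would set $P\colon\ell^{\infty}\to\ell^{\infty}$, $(P\mathbf{s})(n)=\sum_{k=1}^{\infty}a_{nk}|s_k|^{p_k}$. The first thing to check is that this is well defined, i.e.\ that $P\mathbf{s}\in\ell^{\infty}$ whenever $\mathbf{s}\in\ell^{\infty}$: if $\|\mathbf{s}\|_{\infty}=\rho$, then $|s_k|^{p_k}\leq\max(\rho,\rho^{r})$ for every $k$ (using $1\leq p_k\leq r$), while the Toeplitz conditions give $C:=\sup_n\sum_k a_{nk}=\sup_n\sum_k|a_{nk}|<\infty$, so the series defining $(P\mathbf{s})(n)$ converges and $\|P\mathbf{s}\|_{\infty}\leq C\max(\rho,\rho^{r})<\infty$. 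Clearly $P(0)=0$, and since the summands are nonnegative, $P_n\mathbf{s}=(P\mathbf{s})(n)=\sum_{k=1}^{\infty}a_{nk}|s_k|^{p_k}$.

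Next I would verify the three hypotheses of Theorem~\ref{th:main thm} with $M=2^{r-1}$. For condition (i): each $P_n$ is convex, which follows termwise from the convexity of $t\mapsto|t|^{p_k}$ for $p_k\geq 1$ together with $a_{nk}\geq 0$; and $P_n$ is $\tau_p$-lower semicontinuous on all of $\ell^{\infty}$ (hence a fortiori on bounded subsets) because it is the pointwise supremum over $N\in\mathbb{N}$ of the partial sums $\mathbf{s}\mapsto\sum_{k=1}^{N}a_{nk}|s_k|^{p_k}$, each of which is a $\tau_p$-continuous function, being a continuous function of the finitely many coordinates $s_1,\dots,s_N$. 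For condition (ii): from the elementary inequality $(a+b)^{p}\leq 2^{p-1}(a^{p}+b^{p})$ for $a,b\geq 0$ and $p\geq 1$, applied with $p=p_k\leq r$, one obtains $|s_k+t_k|^{p_k}\leq 2^{r-1}(|s_k|^{p_k}+|t_k|^{p_k})$ for all $k$, and summing against $a_{nk}$ yields $P_n(\mathbf{s}+\mathbf{t})\leq 2^{r-1}(P_n\mathbf{s}+P_n\mathbf{t})$. For condition (iii): if $\|\mathbf{y}\|_{\infty}=\eta\leq 1$, then $|y_k|^{p_k}\leq\eta^{p_k}\leq\eta$ for all $k$, so $\|P\mathbf{y}\|_{\infty}\leq C\eta$, which tends to $0$ as $\eta\to 0$; thus $P$ is norm-continuous at $0$.

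Once this is done, Theorem~\ref{th:main thm} applied to the bounded sequence $(x_n)_{n\in\mathbb{N}}$ gives $\sup_{x^{*}\in K}\limsup_{n}P_n(x^{*}(\mathbf{x}))\leq 2^{r-1}\sup_{x^{*}\in B}\limsup_{n}P_n(x^{*}(\mathbf{x}))$, and unravelling $P_n(x^{*}(\mathbf{x}))=\sum_{k=1}^{\infty}a_{nk}|x^{*}(x_k)|^{p_k}$ this is exactly the asserted inequality; here one uses that $x^{*}(\mathbf{x})\in\ell^{\infty}$ for every $x^{*}\in K$ because the weak*-compact set $K$ is norm-bounded. None of the steps is a genuine obstacle; the only point requiring slight care is the lower semicontinuity in (i), which is best handled by the supremum-of-continuous-functions argument above rather than by attempting to prove continuity of the full series, and the repeated appeal to the uniform row-sum bound $C$, which is part of the Toeplitz characterization of regular matrices recalled in Section~2.
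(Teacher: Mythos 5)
Your proof is correct and follows essentially the same route as the paper: the same choice of $P$ with $(P\mathbf{s})(n)=\sum_{k}a_{nk}|s_k|^{p_k}$, the same constant $M=2^{r-1}$ obtained from $(a+b)^p\leq 2^{p-1}(a^p+b^p)$, and an appeal to Theorem~\ref{th:main thm}. The only (harmless) difference is that you verify condition (i) via lower semicontinuity as a supremum of $\tau_p$-continuous partial sums, whereas the paper asserts full $\tau_p$-continuity of each coordinate on bounded sets; both suffice.
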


\begin{proof}
Define $P:\ell^{\infty} \rightarrow \ell^{\infty}$ by $(P\mathbf{x})(n)=\sum_{k=1}^{\infty}a_{nk}|\mathbf{x}(k)|^{p_k}$ for all $n\in \mathbb{N}$ and all $\mathbf{x}\in \ell^{\infty}$. Since for each $p\geq 1$ the function $t \mapsto t^p$ is convex, it follows that the map $P$ is coordinatewise convex. Moreover, it is easy to see that each coordinate function of $P$ is actually continuous with respect to $\tau_p$ on every bounded subset of $\ell^{\infty}$, thus $P$ satisfies the condition (i) of theorem 3.1. The condition (iii) is easily seen to be fulfilled as well. Finally, because of the convexity of $t \mapsto t^p$ for $p\geq 1$, we have $|a+b|^p\leq 2^{p-1}(|a|^p+|b|^p)$ for all $a,b\in \mathbb{C}$ and all $p\geq 1$ and it follows that $P$ also satisfies the conditon (ii) with $M=2^{r-1}$. Theorem \ref{th:main thm} now yields the desired inequality.
\end{proof}

 For a constant sequence $\mathbf{p}$ even more is true.

\begin{corollary} \label{cor:str-matrix SLS2}
Let $K$, $B$ and $A$ be as in Corollary \ref{cor:str-matrix SLS1}. Then for each $p\geq 1$ and every bounded sequence $(x_n)_{n\in \mathbb{N}}$ in $X$, the following equality holds:
\begin{equation}
\sup _{x^{*}\in K} \limsup_{n\to \infty} \sum_{k=1}^{\infty} a_{nk}|x^{*}(x_n)|^p=\sup _{x^{*}\in B} \limsup _{n\to \infty} \sum_{k=1}^{\infty} a_{nk}|x^{*}(x_n)|^p. \label{eq:str-matrix SLS2}
\end{equation}
\end{corollary}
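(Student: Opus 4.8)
The plan is to deduce Corollary \ref{cor:str-matrix SLS2} from the more general Corollary \ref{cor:str-matrix SLS1} by exploiting the fact that for a constant exponent sequence $\mathbf p=(p,p,\dots)$ one has $r=p$, so the constant in Corollary \ref{cor:str-matrix SLS1} becomes $2^{p-1}$, but more importantly one can rescale. The key observation is that the quantity $\sum_{k}a_{nk}|x^*(x_n)|^p$ is $p$-homogeneous in the sequence $\mathbf x$: replacing $(x_n)_n$ by $(\lambda x_n)_n$ multiplies both sides by $|\lambda|^p$, which does not by itself remove the constant. Instead, the trick I would use is a self-improvement argument based on iterating Corollary \ref{cor:str-matrix SLS1}.

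First I would note the trivial inequality ``$\geq$'' in \eqref{eq:str-matrix SLS2}: since $B\subseteq K$, the supremum over $K$ dominates the supremum over $B$. So only ``$\leq$'' needs proof. Denote by $L_C(\mathbf x)$ the left-hand side (supremum over $K$) and by $L_B(\mathbf x)$ the right-hand side (supremum over $B$), as functionals of the bounded sequence $\mathbf x=(x_n)_n$ in $X$. Corollary \ref{cor:str-matrix SLS1} with the constant sequence gives $L_C(\mathbf x)\leq 2^{p-1}L_B(\mathbf x)$. To get rid of the factor $2^{p-1}$, I would exploit a block/convexity structure: observe that the functionals $L_C$ and $L_B$ are built from the sublinear-type quantities $Q_n(\mathbf s)=\bigl(\sum_k a_{nk}|s_k|^p\bigr)^{1/p}$, which \emph{are} genuine seminorms on sequences (by Minkowski's inequality for $\ell^p(a_{nk})$, valid since $p\geq1$). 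Thus if I set $\widetilde P:\ell^\infty\to\ell^\infty$, $(\widetilde P\mathbf x)(n)=Q_n(\mathbf x)$, then $\widetilde P_n=Q_n$ is convex, $\tau_p$-lower semicontinuous on bounded sets (in fact continuous there), continuous at $0$ for the norm topology, and — crucially — subadditive, i.e.\ condition (ii) of Theorem \ref{th:main thm} holds with $M=1$. Applying Theorem \ref{th:main thm} to $\widetilde P$ yields
\begin{equation*}
\sup_{x^*\in K}\limsup_{n\to\infty}\Bigl(\sum_{k=1}^\infty a_{nk}|x^*(x_n)|^p\Bigr)^{1/p}\leq \sup_{x^*\in B}\limsup_{n\to\infty}\Bigl(\sum_{k=1}^\infty a_{nk}|x^*(x_n)|^p\Bigr)^{1/p}.
\end{equation*}
Since $t\mapsto t^{1/p}$ is a strictly increasing continuous bijection on $[0,\infty)$ and since $\limsup$ commutes with it (continuity and monotonicity), raising both sides to the $p$-th power and noting that the supremum of $f^p$ over a set equals $(\sup f)^p$ for nonnegative $f$, one recovers exactly \eqref{eq:str-matrix SLS2} with equality (the reverse inequality being the trivial one noted above).

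The main obstacle I anticipate is verifying cleanly that $Q_n$ satisfies hypothesis (i) of Theorem \ref{th:main thm}, in particular that $\mathbf x\mapsto Q_n(\mathbf x)=\bigl(\sum_k a_{nk}|\mathbf x(k)|^p\bigr)^{1/p}$ is lower semicontinuous (indeed continuous) on bounded subsets of $\ell^\infty$ with respect to $\tau_p$; this follows from the same Fatou/dominated-convergence reasoning already used for $P$ in the proof of Corollary \ref{cor:str-matrix SLS1} (each $a_{nk}\geq 0$, $\sum_k a_{nk}<\infty$ by regularity, and pointwise convergence of a bounded sequence forces convergence of the weighted $\ell^p$-sums), composed with the continuity of $t\mapsto t^{1/p}$. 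The subadditivity of $Q_n$ is exactly Minkowski's inequality for the weighted $\ell^p$-norm and requires $p\geq1$, which is assumed; the continuity of $\widetilde P$ at $0$ for the norm topology is immediate since $Q_n(\mathbf x)\leq(\sum_k a_{nk})^{1/p}\|\mathbf x\|_\infty\leq(\sup_n\sum_k a_{nk})^{1/p}\|\mathbf x\|_\infty$, the bracket being finite by regularity of $A$. Once these routine verifications are in place, Theorem \ref{th:main thm} with $M=1$ does all the work and the passage back via $t\mapsto t^p$ is purely formal. Alternatively, and perhaps more in the spirit of the paper, one could avoid re-invoking Theorem \ref{th:main thm} and instead apply Corollary \ref{cor:str-matrix SLS1} itself but with $\mathbf p$ chosen so that the implicit constant can be absorbed; however, since $r=p$ is forced for a constant sequence, that route does not eliminate the constant, so the $\ell^p$-norm reformulation above is the clean way to go.
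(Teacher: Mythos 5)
Your proposal is correct and is essentially the paper's own proof: the paper likewise defines $(P\mathbf{x})(n)=\bigl(\sum_{k=1}^{\infty}a_{nk}|\mathbf{x}(k)|^{p}\bigr)^{1/p}$ and invokes Minkowski's inequality to obtain condition (ii) of Theorem \ref{th:main thm} with $M=1$. Your additional remarks on verifying (i) and (iii), on the trivial reverse inequality, and on passing back through $t\mapsto t^{p}$ merely fill in details the paper leaves implicit.
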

\begin{proof}
This time we define $P:\ell^{\infty} \rightarrow \ell^{\infty}$ by 
\begin{equation*}
(P\mathbf{x})(n)=\left(\sum_{k=1}^{\infty} a_{nk}|\mathbf{x}(k)|^p\right)^{1/p} \ \forall n\in \mathbb{N}, \forall \mathbf{x}\in \ell^{\infty}.
\end{equation*}
The Minkowski inequality implies that $P$ fulfils (ii) with $M=1$ and conditions (i) and (iii) are fulfilled as well, so \eqref{eq:str-matrix SLS2} follows from Theorem \ref{th:main thm}.
\end{proof}

Now we can extend the Rainwater--Simons convergence theorem to strong matrix summability methods (and en passant also to statistical convergence).

\begin{corollary} \label{cor:RS-thm str-matrix stat conv}
Let $A=(a_{nk})_{n,k\in \mathbb{N}}$ be a positive regular matrix, $B$ an (I)-generating subset of $B_{X^{*}}$ and $\mathbf{p}=(p_k)_{k\in \mathbb{N}}$ a sequence of real numbers with $q=\inf_{k\in \mathbb{N}}p_k >0$ and $r=\sup_{k\in \mathbb{N}}p_k <\infty$. \par 
\noindent Then a bounded sequence $(x_n)_{n\in \mathbb{N}}$ in $X$ is strongly $A$-$\mathbf{p}$-convergent to $x\in X$ under every functional $x^{*}\in X^{*}$ if (and only if) it is strongly $A$-$\mathbf{p}$-convergent to $x$ under every functional in $B$. \par 
\noindent The same statement also holds for $A$-statistical convergence.
\end{corollary}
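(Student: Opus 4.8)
The \emph{only if} part is immediate, since $B\subseteq B_{X^{*}}$. For the converse the plan is to reduce to a constant exponent sequence and then invoke Corollary~\ref{cor:str-matrix SLS2} with $p=1$, for which the two sides of \eqref{eq:str-matrix SLS2} coincide.

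First I would record the elementary observation that, for a \emph{bounded} scalar sequence $(s_{k})_{k\in\mathbb{N}}$ and a scalar $s$, the statements ``$(s_{k})$ is strongly $A$-$\mathbf{p}$-convergent to $s$'', ``$(s_{k})$ is strongly $A$-$1$-convergent to $s$'' and ``$(s_{k})$ is $A$-statistically convergent to $s$'' are all equivalent; this is where the assumptions $q=\inf_{k}p_{k}>0$ and $r=\sup_{k}p_{k}<\infty$ are used. The proof is the same splitting estimate that underlies Theorem~\ref{th:stat-str-matrix-conv}: for $0<\varepsilon<1$ one writes $\sum_{k}a_{nk}|s_{k}-s|^{p_{k}}$ as the sum over $B_{\varepsilon}=\{k:|s_{k}-s|\ge\varepsilon\}$ plus the sum over its complement; on the complement $|s_{k}-s|^{p_{k}}\le|s_{k}-s|^{q}<\varepsilon^{q}$, so that part is at most $\varepsilon^{q}\sup_{n}\sum_{k}a_{nk}$, while on $B_{\varepsilon}$ one has $|s_{k}-s|^{p_{k}}\le C$ for a constant $C$ depending only on $q$, $r$ and $\sup_{k}|s_{k}-s|$, so that part is at most $C\sum_{k}a_{nk}\chi_{B_{\varepsilon}}(k)$. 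Letting $n\to\infty$ and then $\varepsilon\to0$ shows that $A$-statistical convergence implies strong $A$-$\mathbf{p}$-convergence; conversely $|s_{k}-s|^{p_{k}}\ge\varepsilon^{p_{k}}\ge\varepsilon^{r}$ on $B_{\varepsilon}$ (here $r<\infty$ enters), which yields the other implication. Taking $p_{k}\equiv1$ throughout handles strong $A$-$1$-convergence as well.

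Granting this, I would note that for each fixed $x^{*}\in X^{*}$ the scalar sequence $(x^{*}(x_{n}))_{n}$ is bounded, so that ``$(x_{n})$ is strongly $A$-$\mathbf{p}$-convergent to $x$ under $x^{*}$'' is equivalent to the corresponding statement for strong $A$-$1$-convergence and for $A$-statistical convergence. Hence it suffices to prove the corollary for strong $A$-$1$-convergence, and this will simultaneously establish the $A$-statistical version. So I would assume $(x_{n})_{n}$ to be strongly $A$-$1$-convergent to $x$ under every $x^{*}\in B$, put $y_{n}=x_{n}-x$, and rephrase the hypothesis as $\sup_{x^{*}\in B}\limsup_{n}\sum_{k}a_{nk}|x^{*}(y_{k})|=0$. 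Applying Corollary~\ref{cor:str-matrix SLS2} with $K=B_{X^{*}}$ and $p=1$ then gives $\sup_{x^{*}\in B_{X^{*}}}\limsup_{n}\sum_{k}a_{nk}|x^{*}(y_{k})|=0$, and rescaling an arbitrary nonzero $x^{*}\in X^{*}$ so that $x^{*}/\|x^{*}\|\in B_{X^{*}}$ shows $\lim_{n}\sum_{k}a_{nk}|x^{*}(y_{k})|=0$, i.e. $(x_{n})$ is strongly $A$-$1$-convergent to $x$ under every $x^{*}\in X^{*}$.

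The main (and really the only) point requiring care is the first step: one must make sure the bound $|s_{k}-s|^{p_{k}}\le C$ on $B_{\varepsilon}$ is uniform in $k$ and that the estimate on the complement is uniform in $n$, which is precisely what $0<q\le p_{k}\le r<\infty$ together with the regularity of $A$ guarantee. Everything afterwards is a routine rescaling plus a reference to Corollary~\ref{cor:str-matrix SLS2}.
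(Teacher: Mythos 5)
Your proof is correct, and it follows the same basic strategy as the paper's: reduce the variable-exponent statement to a case already covered by one of the Simons-type corollaries, using the equivalence of the various strong convergence notions on bounded scalar sequences. The differences are worth noting. The paper reduces to the case $q\geq 1$ by replacing $\mathbf{p}$ with $(p_k/q)_{k\in\mathbb{N}}$ and then invokes Corollary \ref{cor:str-matrix SLS1} (the constant $2^{r-1}$ is harmless since the right-hand side is zero), justifying the replacement only by citing the remark after Theorem \ref{th:stat-str-matrix-conv}, which literally concerns constant exponents; you instead reduce all the way to the constant exponent $p=1$ and invoke Corollary \ref{cor:str-matrix SLS2}. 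To do so you explicitly prove the variable-exponent version of Connor's equivalence (strong $A$-$\mathbf{p}$-convergence $\Leftrightarrow$ $A$-statistical convergence for bounded sequences, under $0<q\leq p_k\leq r<\infty$), via the splitting over $B_\varepsilon$ and its complement; your estimates there are correct, and this is precisely the lemma needed to make the paper's own reduction rigorous, so your write-up is in fact somewhat more complete than the published proof. Your handling of the $A$-statistical case as a byproduct of the same equivalence, and the final rescaling from $B_{X^*}$ to all of $X^*$, match the intended argument. One tiny point of care: when you conclude from $\sup_{x^*\in B_{X^*}}\limsup_n\sum_k a_{nk}|x^*(y_k)|=0$ that each individual limit is zero, you are implicitly using that the quantities are non-negative, which is of course fine but worth a word.
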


\begin{proof}
In case $q\geq 1$ this is immediate from Corollary \ref{cor:str-matrix SLS1}. In general we can simply replace the sequence $\mathbf{p}$ by $(p_k/q)_{k\in \mathbb{N}}$ and get the same result, because for bounded sequences strong $A$-$p$- and strong $A$-$s$-convergence are equivalent for any $p,s>0$ by the remark following Theorem \ref{th:stat-str-matrix-conv}. The case of $A$-statistical convergence also follows from this remark.
\end{proof}

The case of statistically pre-Cauchy sequences can be treated in much the same way.

\begin{corollary} \label{cor:RS-thm stat-pre-Cauchy}
Let $B$ be an (I)-generating subset of $B_{X^{*}}$ and $(x_n)_{n\in \mathbb{N}}$ a bounded sequence in $X$ such that $(x^{*}(x_n))_{n\in \mathbb{N}}$ is statistically pre-Cauchy for all $x^{*}\in B$. Then $(x_n)_{n\in \mathbb{N}}$ is ``weakly statistically pre-Cauchy'', i.e. $(x^{*}(x_n))_{n\in \mathbb{N}}$ is statistically pre-Cauchy for every $x^{*}\in X^{*}$.
\end{corollary}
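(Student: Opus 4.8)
The plan is to apply Theorem \ref{th:main thm} with $K=B_{X^{*}}$ for a carefully chosen map $P$, and then convert the resulting inequality into a statement about statistical pre-Cauchyness via Theorem \ref{th:stat-pre-cauchy}. Concretely, I would define $P:\ell^{\infty}\rightarrow\ell^{\infty}$ by
\[
(P\mathbf{x})(n)=\frac{1}{n^{2}}\sum_{i,j\leq n}|\mathbf{x}(i)-\mathbf{x}(j)|\qquad(n\in\mathbb{N},\ \mathbf{x}\in\ell^{\infty}),
\]
which takes values in $\ell^{\infty}$ (each coordinate is bounded by $\|\mathbf{x}\|_{\infty}$ thanks to the $1/n^{2}$ normalization), satisfies $P(0)=0$, and has $P_{n}(\mathbf{x})=(P\mathbf{x})(n)\geq 0$.

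Next I would check conditions (i)--(iii). For (i), each $P_{n}$ is a positive linear combination of the maps $\mathbf{x}\mapsto|\mathbf{x}(i)-\mathbf{x}(j)|$, which are convex and depend only on finitely many coordinates, hence convex and $\tau_{p}$-continuous on all of $\ell^{\infty}$ (in particular $\tau_{p}$-lower semicontinuous on bounded sets). Condition (ii) holds with $M=1$: summing the triangle inequality $|(\mathbf{x}+\mathbf{y})(i)-(\mathbf{x}+\mathbf{y})(j)|\leq|\mathbf{x}(i)-\mathbf{x}(j)|+|\mathbf{y}(i)-\mathbf{y}(j)|$ over $i,j\leq n$ gives $P_{n}(\mathbf{x}+\mathbf{y})\leq P_{n}\mathbf{x}+P_{n}\mathbf{y}$. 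Condition (iii) is immediate, since $\|\mathbf{y}\|_{\infty}\leq\delta$ forces $|(P\mathbf{y})(n)|\leq 2\delta$ for all $n$, so $\|P\mathbf{y}\|_{\infty}\leq 2\delta$.

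Now I would feed this into Theorem \ref{th:main thm}. For $x^{*}\in B$ the sequence $(x^{*}(x_{n}))_{n}$ is bounded (as $\|x^{*}\|\leq 1$ and $(x_{n})_{n}$ is bounded) and statistically pre-Cauchy by assumption, so Theorem \ref{th:stat-pre-cauchy} yields $P_{n}(x^{*}(\mathbf{x}))\to 0$; hence the right-hand side of \eqref{eq:main thm} (with $M=1$) is $0$. Therefore $\limsup_{n\to\infty}P_{n}(x^{*}(\mathbf{x}))=0$, i.e. $P_{n}(x^{*}(\mathbf{x}))\to 0$, for every $x^{*}\in B_{X^{*}}$, and the easy (norm-free) direction of Theorem \ref{th:stat-pre-cauchy} then shows that $(x^{*}(x_{n}))_{n}$ is statistically pre-Cauchy for every $x^{*}\in B_{X^{*}}$. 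Finally, for an arbitrary nonzero $x^{*}\in X^{*}$ I would apply this to $x^{*}/\|x^{*}\|\in B_{X^{*}}$ and invoke the fact that multiplying a scalar sequence by a fixed nonzero constant preserves statistical pre-Cauchyness (it only rescales the threshold $\varepsilon$); the case $x^{*}=0$ is trivial.

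I do not expect a genuine obstacle here: once the functional $P$ above is identified, the proof is a routine verification of (i)--(iii) for this $P$ plus bookkeeping of the two directions of Theorem \ref{th:stat-pre-cauchy}. The only mildly delicate points are confirming that $P$ maps into $\ell^{\infty}$ and that the semicontinuity/convexity hypotheses hold on \emph{all} bounded subsets of $\ell^{\infty}$, both of which are handled by the $1/n^{2}$ factor and the finite-coordinate dependence of each $P_{n}$.
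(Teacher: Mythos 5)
Your proof is correct and follows exactly the paper's route: the paper's own proof consists precisely of defining the same map $(P\mathbf{x})(n)=\frac{1}{n^{2}}\sum_{i,j\leq n}|\mathbf{x}(i)-\mathbf{x}(j)|$ and invoking Theorems \ref{th:stat-pre-cauchy} and \ref{th:main thm}, leaving the verification of (i)--(iii) and the bookkeeping to the reader, which you have carried out correctly (including the scaling step for general $x^{*}$). The only microscopic slip is that each coordinate of $P\mathbf{x}$ is bounded by $2\|\mathbf{x}\|_{\infty}$ rather than $\|\mathbf{x}\|_{\infty}$, which changes nothing.
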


\begin{proof}
Define $P:\ell^{\infty} \rightarrow \ell^{\infty}$ by 
\begin{equation*}
(P\mathbf{x})(n)=\frac{1}{n^2} \sum_{i,j\leq n} |\mathbf{x}(i)-\mathbf{x}(j)| \ \ \forall n\in \mathbb{N}, \forall \mathbf{x}\in \ell^{\infty}
\end{equation*}
and apply Theorems \ref{th:stat-pre-cauchy} and \ref{th:main thm} to get the desired conclusion.
\end{proof}

Next we consider the case of $F_A$-convergence.

\begin{corollary} \label{cor:RS-thm FA-conv}
Let $B$ be an (I)-generating subset of the dual unit ball $B_{X^{*}}$ and $A=(a_{nk})_{n,k\in \mathbb{N}}$ a regular matrix. Further, let $(x_n)_{n\in \mathbb{N}}$ be a bounded sequence in $X$ as well as $x\in X$ such that $(x^{*}(x_n))_{n\in \mathbb{N}}$ is $F_A$-convergent to $x^{*}(x)$ for all $x^{*}\in B$. \par 
\noindent Then $(x_n)_{n\in \mathbb{N}}$ is $F_A$-convergent to $x$ under every functional $x^{*}\in X^{*}$. \par 
\noindent In particular this is true for the method of almost convergence.
\end{corollary}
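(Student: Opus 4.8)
The plan is to realize $F_A$-convergence as a two-sided (limsup/liminf) pair of applications of Theorem~\ref{th:main thm}, just as in the preceding corollaries, but now taking the uniformity in the shift parameter $l\in\mathbb{N}_0$ into account. Concretely, I would define $P:\ell^{\infty}\to\ell^{\infty}$ by letting $(P\mathbf{x})(n)$ measure the worst deviation, over all shifts $l$, of the $A$-average $\sum_{k=1}^{\infty}a_{nk}\mathbf{x}(k+l)$ from the common target. Since the target is not available inside the abstract setup of Theorem~\ref{th:main thm} (the map $P$ must satisfy $P(0)=0$ and be built from $\mathbf{x}$ alone), the right move is to apply the theorem to the shifted sequence $\mathbf{y}=(x_n-x)_{n\in\mathbb{N}}$ and use linearity: $x^{*}(x_n-x)=x^{*}(x_n)-x^{*}(x)$, so $F_A$-convergence of $(x^{*}(x_n))$ to $x^{*}(x)$ is the same as $F_A$-convergence of $(x^{*}(x_n-x))$ to $0$, i.e.\ $\sup_{l}\bigl|\sum_k a_{nk}x^{*}(x_{k+l}-x)\bigr|\to 0$.

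The cleanest formulation is therefore: set $(P\mathbf{x})(n)=\sup_{l\in\mathbb{N}_0}\bigl|\sum_{k=1}^{\infty}a_{nk}\mathbf{x}(k+l)\bigr|$. Then $P(0)=0$, and $F_A$-convergence of a bounded scalar sequence $\mathbf{s}$ to $s$ is exactly $P_n(\mathbf{s}-s\mathbf{1})\to 0$. I would verify the three hypotheses of Theorem~\ref{th:main thm} for this $P$. Condition~(ii) holds with $M=1$: each inner quantity $\bigl|\sum_k a_{nk}(\mathbf{x}(k+l)+\mathbf{y}(k+l))\bigr|\le\bigl|\sum_k a_{nk}\mathbf{x}(k+l)\bigr|+\bigl|\sum_k a_{nk}\mathbf{y}(k+l)\bigr|$, and taking suprema over $l$ gives subadditivity. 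Condition~(iii) follows from regularity of $A$: if $\|\mathbf{y}\|_{\infty}\le\delta$ then $\bigl|\sum_k a_{nk}\mathbf{y}(k+l)\bigr|\le\delta\sum_k|a_{nk}|\le\delta\,C$ with $C=\sup_n\sum_k|a_{nk}|<\infty$, uniformly in $n$ and $l$, so $\|P\mathbf{y}\|_{\infty}\le C\delta$. For condition~(i), convexity of each $P_n$ is inherited from the fact that $P_n$ is a supremum of the convex (indeed affine-modulus) functions $\mathbf{x}\mapsto\bigl|\sum_k a_{nk}\mathbf{x}(k+l)\bigr|$; lower semicontinuity with respect to $\tau_p$ on bounded sets is the only point needing a little care, and it also follows by writing $P_n$ as a supremum over $l$ of $\tau_p$-continuous functions (each finite sum $\sum_k a_{nk}\mathbf{x}(k+l)$ is $\tau_p$-continuous, and on a bounded set the tails are uniformly small by the summability of $(a_{nk})_k$, so each inner function is genuinely $\tau_p$-continuous on bounded sets, whence the sup is $\tau_p$-lower semicontinuous there).

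With the hypotheses checked, Theorem~\ref{th:main thm} applied to $\mathbf{y}=(x_n-x)_{n}$ gives
\begin{equation*}
\sup_{x^{*}\in B_{X^{*}}}\limsup_{n\to\infty}P_n\bigl(x^{*}(\mathbf{x})-x^{*}(x)\mathbf{1}\bigr)\le\sup_{x^{*}\in B}\limsup_{n\to\infty}P_n\bigl(x^{*}(\mathbf{x})-x^{*}(x)\mathbf{1}\bigr)=0,
\end{equation*}
since by hypothesis each term on the right vanishes. Hence for every $x^{*}\in X^{*}$ (after normalizing) we get $\limsup_n P_n(x^{*}(\mathbf{x})-x^{*}(x)\mathbf{1})=0$, which is precisely $F_A$-convergence of $(x^{*}(x_n))$ to $x^{*}(x)$. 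The statement about almost convergence is then immediate by taking $A=C$ the Ces\`aro matrix and invoking Theorem~\ref{th:alm-conv}. I expect the main obstacle to be the verification of condition~(i), specifically checking that the supremum over the infinitely many shifts $l$ does not destroy $\tau_p$-lower semicontinuity on bounded sets; the key observation that makes this routine is that on a bounded set each individual functional $\mathbf{x}\mapsto\sum_k a_{nk}\mathbf{x}(k+l)$ is $\tau_p$-continuous (uniformly convergent tails), and an arbitrary supremum of $\tau_p$-lower semicontinuous functions is $\tau_p$-lower semicontinuous.
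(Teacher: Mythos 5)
Your proposal is correct and follows essentially the same route as the paper: the paper defines exactly the same map $(P\mathbf{x})(n)=\sup_{l}\bigl|\sum_{k=1}^{\infty}a_{nk}\mathbf{x}(k+l)\bigr|$ and invokes Theorem~\ref{th:main thm} with $M=1$, leaving the verification of (i)--(iii) and the reduction to the shifted sequence $(x_n-x)_n$ implicit, which you carry out explicitly and correctly (your tail-estimate argument for $\tau_p$-continuity on bounded sets and the use of $\sum_k a_{nk}\to 1$ to identify $F_A$-convergence to $s$ with $P_n(\mathbf{s}-s\mathbf{1})\to 0$ are exactly the omitted details).
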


\begin{proof}
We define $P:\ell^{\infty} \rightarrow \ell^{\infty}$ by 
\begin{equation*}
(P\mathbf{x})(n)=\sup _{l\in \mathbb{N}} \left|\sum_{k=1}^{\infty} a_{nk}\mathbf{x}(k+l)\right| \ \forall n\in \mathbb{N},\forall \mathbf{x}\in \ell^{\infty}.
\end{equation*}
Then $P$ fulfils the conditions (i), (ii) and (iii) of Theorem \ref{th:main thm} (with M=1 in (ii)) and so the assertion easily follows. The ``in particular'' part follows from Theorem \ref{th:alm-conv}.
\end{proof}

Let us finish this note with an application of Corollary \ref{cor:RS-thm str-matrix stat conv}. As mentioned before, it is proved in \cite{nygaard} that a Banach space $X$ whose dual unit ball is weak*-sequentially compact is reflexive if $B_X$ (I)-generates $B_{X^{**}}$, see also \cite[Corollaries 3.5, 3.6 and 3.7]{kalenda1}. Further, it is proved in \cite{kalenda2} that every non-reflexive Banach space can be renormed such that the unit ball in this renorming does not (I)-generate the respective bidual unit ball. Corollary \ref{cor:reflexivity} below can be viewed as a slight generalization of the result from \cite{nygaard}. \par 

We remark that if $A$ is a positive regular matrix and $(s_k)_{k\in \mathbb{N}}$ a sequence of non-negative real numbers which is $A$-convergent to zero, then it is easy to see that $0$ is a cluster point of $(s_k)_{k\in \mathbb{N}}$ (in the ordinary sense). From this it is easy to deduce the following: if $(x_n)_{n\in \mathbb{N}}$ is a sequence in $X$ which is strongly $A$-convergent to $x\in X$ under every $x^{*}\in X^{*}$, then $x$ is a weak cluster point of $(x_n)_{n\in \mathbb{N}}$. \par 

\begin{corollary} \label{cor:weakcom}
Suppose that $B$ is an (I)-generating subset of $B_{X^{*}}$ and that $M\subseteq X$ is bounded. If for each sequence $(x_n)_{n\in \mathbb{N}}$ in $M$ there is a positive regular matrix $A$ and an $x\in X$ such that $(x^{*}(x_n))_{n\in \mathbb{N}}$ is strongly $A$-convergent to $x^{*}(x)$ for all $x^{*}\in B$, then $M$ is relatively weakly compact. \par 
\noindent In particular, $M$ is relatively weakly compact if each sequence in $M$ has a subsequence that is statistically convergent to some $x\in X$ under every functional in $B$ (i.e. if $M$ is ``statistically sequentially compact'' in the topology of pointwise convergence on $B$).
\end{corollary}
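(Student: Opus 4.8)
The plan is to deduce this from the Eberlein--\v{S}mulian theorem, which guarantees that a bounded subset $W$ of a Banach space $X$ is relatively weakly compact provided that every sequence in $W$ has a weak cluster point belonging to $X$. Thus it suffices to show: given any sequence $(x_n)_{n\in\mathbb{N}}$ in $M$, there is some $x\in X$ which is a weak cluster point of $(x_n)_{n\in\mathbb{N}}$.

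First I would settle the main assertion. By hypothesis we may choose a positive regular matrix $A$ and a point $x\in X$ so that $(x^{*}(x_n))_{n\in\mathbb{N}}$ is strongly $A$-convergent to $x^{*}(x)$ for every $x^{*}\in B$. Since $M$, and hence $(x_n)_{n\in\mathbb{N}}$, is bounded, Corollary \ref{cor:RS-thm str-matrix stat conv} applied with the constant index sequence $\mathbf{p}\equiv 1$ (so $q=r=1$) upgrades this to strong $A$-convergence of $(x^{*}(x_n))_{n\in\mathbb{N}}$ to $x^{*}(x)$ for \emph{every} $x^{*}\in X^{*}$. By the remark preceding the present corollary, this forces $x$ to be a weak cluster point of $(x_n)_{n\in\mathbb{N}}$. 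Note that the hypotheses deliver $x$ only as a weak cluster point (the subsequences along which $x^{*}(x_n)\to x^{*}(x)$ depend on $x^{*}$), which is exactly the input that the ``countably compact'' form of Eberlein--\v{S}mulian consumes; invoking it finishes the first part.

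For the ``in particular'' statement I would argue in the same spirit, additionally using the elementary fact that a weak cluster point of a subsequence is also a weak cluster point of the whole sequence (so it is neither necessary nor, since only subsequences converge, possible to quote the main assertion verbatim). Given $(x_n)_{n\in\mathbb{N}}$ in $M$, pick a subsequence $(x_{n_k})_{k\in\mathbb{N}}$ and a point $x\in X$ such that $(x^{*}(x_{n_k}))_{k\in\mathbb{N}}$ is statistically convergent to $x^{*}(x)$ for every $x^{*}\in B$. This subsequence is bounded, so by Theorem \ref{th:stat-str-matrix-conv}(ii) (with $p=1$) statistical convergence here coincides with strong Ces\`aro convergence, i.e. $(x^{*}(x_{n_k}))_{k\in\mathbb{N}}$ is strongly $C$-convergent to $x^{*}(x)$ for every $x^{*}\in B$, where $C$ denotes the (positive and regular) Ces\`aro matrix. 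Now the argument of the previous paragraph applies with $A=C$ to the sequence $(x_{n_k})_{k\in\mathbb{N}}$: it shows $x$ is a weak cluster point of $(x_{n_k})_{k\in\mathbb{N}}$, hence of $(x_n)_{n\in\mathbb{N}}$, and one more appeal to Eberlein--\v{S}mulian yields that $M$ is relatively weakly compact.

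The only non-elementary ingredient here is the Eberlein--\v{S}mulian theorem in its ``relatively weakly countably compact implies relatively weakly compact'' form; everything else is bookkeeping with the convergence notions and corollaries already established. Accordingly, the step I would be most careful about is the reduction itself: making sure that what the hypotheses produce is precisely a weak cluster point lying \emph{in $X$} (which is what the remark before the corollary gives), and, in the ``in particular'' part, routing the whole argument through weak cluster points of subsequences rather than trying to verify the hypothesis of the main assertion for $M$ directly.
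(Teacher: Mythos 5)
Your proposal is correct and follows essentially the same route as the paper: upgrade the hypothesis to strong $A$-convergence under every $x^{*}\in X^{*}$ via Corollary \ref{cor:RS-thm str-matrix stat conv}, use the preceding remark to obtain a weak cluster point in $X$, and conclude relative weak compactness from relative weak countable compactness by Eberlein--\v{S}mulian. Your treatment of the ``in particular'' part (passing through subsequences and the equivalence of statistical and strong Ces\`aro convergence for bounded sequences) simply makes explicit what the paper leaves implicit.
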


\begin{proof}
From Corollary \ref{cor:RS-thm str-matrix stat conv} and the above remark we conclude that $M$ is re\-latively weakly countably compact and hence also relatively weakly compact by the Eberlein--Shmulyan theorem.
\end{proof}

From Corollary \ref{cor:weakcom} our reflexivity result now immediately follows.

\begin{corollary} \label{cor:reflexivity}
Suppose that $B_X$ (I)-generates $B_{X^{**}}$ and that for each sequence $(x_n^{*})_{n\in \mathbb{N}}$ in $B_{X^{*}}$ there is a positive regular matrix $A$ and an $x^{*}\in X^{*}$ such that $(x_n^{*}(x))_{n\in \mathbb{N}}$ is strongly $A$-convergent to $x^{*}(x)$ for all $x\in X$. Then $X$ is reflexive.
In particular, $X$ is reflexive if $B_X$ (I)-generates $B_{X^{**}}$ and $B_{X^{*}}$ is ``weak*-statistically sequentially compact''.
\end{corollary}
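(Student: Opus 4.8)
The plan is to apply Corollary~\ref{cor:weakcom} to the Banach space $X^{*}$ in the role of the ground space. Since $(X^{*})^{*}=X^{**}$, the standing hypothesis that $B_{X}$ (I)-generates $B_{X^{**}}$ says precisely that (the canonical image of) $B_{X}$ is an (I)-generating subset of the dual unit ball of $X^{*}$; this will serve as the set ``$B$'' in Corollary~\ref{cor:weakcom}. For the bounded set ``$M$'' I take $M=B_{X^{*}}$. Let $(x_{n}^{*})_{n\in\mathbb{N}}$ be any sequence in $B_{X^{*}}$. By hypothesis there is a positive regular matrix $A$ and an $x^{*}\in X^{*}$ such that $(x_{n}^{*}(x))_{n}$ is strongly $A$-convergent to $x^{*}(x)$ for every $x\in X$, in particular for every $x\in B_{X}$; and since $x_{n}^{*}(x)=\widehat{x}(x_{n}^{*})$ and $x^{*}(x)=\widehat{x}(x^{*})$ when $\widehat{x}$ denotes the canonical image of $x$, this is exactly the assumption needed in Corollary~\ref{cor:weakcom}. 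That corollary therefore yields that $B_{X^{*}}$ is relatively weakly compact in $X^{*}$.

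Now $B_{X^{*}}$ is convex and norm-closed, hence weakly closed, so being relatively weakly compact it is in fact weakly compact. By the classical characterization of reflexivity (a Banach space is reflexive precisely when its closed unit ball is weakly compact) it follows that $X^{*}$ is reflexive, and consequently $X$ is reflexive as well.

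For the ``in particular'' statement I would repeat the argument using the second part of Corollary~\ref{cor:weakcom}: weak*-statistical sequential compactness of $B_{X^{*}}$ means that every sequence in $B_{X^{*}}$ has a subsequence that is statistically convergent to some $x^{*}\in X^{*}$ under every functional $x\in X$, equivalently under every functional in the canonical image of $B_{X}$; thus $B_{X^{*}}$ is ``statistically sequentially compact in the topology of pointwise convergence on $B_{X}$'' and the second assertion of Corollary~\ref{cor:weakcom} applies directly. The whole proof is essentially bookkeeping, and the only point requiring a moment's care --- the main (mild) obstacle --- is to recognize that Corollary~\ref{cor:weakcom} may legitimately be applied with $X^{*}$ as the ground space, i.e.\ that $B_{X}$ (I)-generates the bidual unit ball $B_{(X^{*})^{*}}=B_{X^{**}}$ and that the given strong $A$-convergence statement ``for all $x\in X$'' is exactly what Corollary~\ref{cor:weakcom} requires for the (I)-generating set $B_{X}$.
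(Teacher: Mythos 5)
Your proposal is correct and follows exactly the paper's route: apply Corollary \ref{cor:weakcom} with $X^{*}$ as the ground space, the canonical image of $B_X$ as the (I)-generating set, and $M=B_{X^{*}}$, then conclude reflexivity of $X^{*}$ and hence of $X$. The extra care you take in spelling out the identification $x_n^{*}(x)=\widehat{x}(x_n^{*})$ and the weak closedness of $B_{X^{*}}$ is just a more explicit rendering of the same one-line argument in the paper.
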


\begin{proof}
From Corollary \ref{cor:weakcom} it follows that $B_{X^{*}}$ is weakly compact, thus $X^{*}$ and hence also $X$ is reflexive.
\end{proof}

\noindent {\em Remark.} The author was informed by the referee that Corollaries \ref{cor:weakcom} and \ref{cor:reflexivity} are related to some known characterizations of weak compactness, namely the result from \cite{pelczynski} combined with \cite[Theorem 1]{lorentz2} immediately yields the following: a bounded weakly closed subset $K$ of $X$ is weakly compact if and only if for each sequence $(x_n)_{n\in \mathbb{N}}$ in $K$ there is some $x\in X$ and a positive regular row-finite\footnote{This means that every row of the matrix contains only finitely many non-zero elements.} matrix $A$ such that $(x_n)_{n\in \mathbb{N}}$ is strongly $A$-convergent to $x$ under every functional $x^{*}\in X^{*}$. In particular, one gets the following analogue of the result from \cite{singer}: the Banach space $X$ is reflexive if and only if for every bounded sequence $(x_n)_{n\in \mathbb{N}}$ in $X$ there exists some $x\in X$ and a positive regular row-finite matrix $A$ such that $(x^{*}(x_n))_{n\in \mathbb{N}}$ is strongly $A$-summable to $x^{*}(x)$ for every $x^{*}\in X^{*}$. \par

Also, the author was informed by V. Kadets about some related results in the paper \cite{kadets}, where the Rainwater theorem for filter convergence is studied in detail.

\ \par \noindent

{\sc \noindent Department of Mathematics \\ Freie Universit\"at Berlin \\ Arnimallee 6, 14195 Berlin \\ Germany \\} 
{\it E-mail address:} \href{mailto:hardtke@math.fu-berlin.de} {\tt hardtke@math.fu-berlin.de}

\end{document}